\newcommand{\nc}{\newcommand}
\nc{\vg}{\mathfrak{v} } \nc{\wg}{\mathfrak{w} } \nc{\zg}{\mathfrak{z} }
\nc{\ngo}{\mathfrak{n} } \nc{\kg}{\mathfrak{k} } \nc{\mg}{\mathfrak{m} }
\nc{\bg}{\mathfrak{b} } \nc{\ggo}{\mathfrak{g} } \nc{\ggob}{\overline{\mathfrak{g}}
} \nc{\sog}{\mathfrak{so} } \nc{\sug}{\mathfrak{su} } \nc{\spg}{\mathfrak{sp} }
\nc{\slg}{\mathfrak{sl} } \nc{\glg}{\mathfrak{gl} } \nc{\cg}{\mathfrak{c} }
\nc{\rg}{\mathfrak{r} } \nc{\hg}{\mathfrak{h} } \nc{\tg}{\mathfrak{t} }
\nc{\ug}{\mathfrak{u} } \nc{\dg}{\mathfrak{d} } \nc{\ag}{\mathfrak{a} }
\nc{\pg}{\mathfrak{p} } \nc{\sg}{\mathfrak{s} } \nc{\pca}{\mathcal{P}}
\nc{\nca}{\mathcal{N}} \nc{\lca}{\mathcal{L}} \nc{\oca}{\mathcal{O}}
\nc{\mca}{\mathcal{M}} \nc{\tca}{\mathcal{T}} \nc{\aca}{\mathcal{A}}
\nc{\cca}{\mathcal{C}} \nc{\gca}{\mathcal{G}} \nc{\sca}{\mathcal{S}}
\nc{\hca}{\mathcal{H}} \nc{\bca}{\mathcal{B}} \nc{\dca}{\mathcal{D}}
\nc{\val}{\operatorname{val}}
\nc{\vp}{\varphi} \nc{\ddt}{\tfrac{{\rm d}}{{\rm d}t}} \nc{\im}{\mathtt{i}}
\nc{\SO}{\mathrm{SO}} \nc{\Spe}{\mathrm{Sp}} \nc{\Sl}{\mathrm{SL}}
\nc{\SU}{\mathrm{SU}} \nc{\Or}{\mathrm{O}} \nc{\U}{\mathrm{U}} \nc{\Gl}{\mathrm{GL}}
\nc{\Se}{\mathrm{S}} \nc{\Cl}{\mathrm{Cl}} \nc{\Spein}{\mathrm{Spin}}
\nc{\Pin}{\mathrm{Pin}} \nc{\G}{\mathrm{GL}_n} \nc{\g}{\mathfrak{gl}_n}
\nc{\RR}{{\Bbb R}} \nc{\HH}{{\Bbb H}} \nc{\CC}{{\Bbb C}} \nc{\ZZ}{{\Bbb Z}}
\nc{\FF}{{\Bbb F}} \nc{\NN}{{\Bbb N}} \nc{\QQ}{{\Bbb Q}} \nc{\PP}{{\Bbb P}}
\nc{\vs}{\vspace{.2cm}} \nc{\vsp}{\vspace{1cm}} \nc{\ip}{\langle\cdot,\cdot\rangle}
\nc{\ipp}{(\cdot,\cdot)} \nc{\la}{\langle} \nc{\ra}{\rangle} \nc{\unm}{\tfrac{1}{2}}
\nc{\unc}{\tfrac{1}{4}} \nc{\und}{\tfrac{1}{16}} \nc{\no}{\vs\noindent}
\nc{\lam}{\Lambda^2(\RR^n)^*\otimes\RR^n} \nc{\tangz}{{\rm T}^{\rm Zar}}
\nc{\nor}{{\sf n}} \nc{\eigen}{(k_1<...<k_r;d_1,...,d_r)}
\nc{\eigencero}{(0<k_2<...<k_r;d_1,...,d_r)} \nc{\mum}{/\!\!/}
\nc{\kir}{/\!\!/\!\!/} \nc{\Ri}{\tfrac{4}{||\mu||^2}\Ric_{\mu}}
\nc{\ds}{\displaystyle} \nc{\ben}{\begin{enumerate}} \nc{\een}{\end{enumerate}}
\nc{\f}{\frac}
\nc{\He}{\operatorname{Hess}} \nc{\ad}{\operatorname{ad}}
\nc{\Ad}{\operatorname{Ad}} \nc{\rank}{\operatorname{rank}}
\nc{\Irr}{\operatorname{Irr}} \nc{\End}{\operatorname{End}}
\nc{\Aut}{\operatorname{Aut}} \nc{\Inn}{\operatorname{Inn}}
\nc{\Der}{\operatorname{Der}} \nc{\Ker}{\operatorname{Ker}}
\nc{\Iso}{\operatorname{I}} \nc{\Diff}{\operatorname{D}} \nc{\Lie}{\operatorname{L}}
\nc{\tr}{\operatorname{tr}} \nc{\dif}{\operatorname{d}}
\nc{\sen}{\operatorname{sen}} \nc{\modu}{\operatorname{mod}}
\nc{\Ric}{\operatorname{R}} \nc{\Ricg}{\operatorname{Ric^{\gamma}}}
\nc{\Ricci}{\operatorname{Ric}} \nc{\sym}{\operatorname{sym}}
\nc{\symac}{\operatorname{sym^{ac}}} \nc{\symc}{\operatorname{sym^{c}}}
\nc{\scalar}{\operatorname{sc}} \nc{\grad}{\operatorname{grad}}
\nc{\ricci}{\operatorname{ric}} \nc{\ricciac}{\operatorname{ric^{ac}}}
\nc{\riccic}{\operatorname{ric^{c}}} \nc{\riccig}{\operatorname{ric^{\gamma}}}
\nc{\Rin}{\operatorname{M}} \nc{\Le}{\operatorname{L}} \nc{\tang}{\operatorname{T}}
\nc{\level}{\operatorname{level}} \nc{\rad}{\operatorname{r}}
\nc{\abel}{\operatorname{ab}} \nc{\CH}{\operatorname{CH}}
\nc{\mcc}{\operatorname{mcc}} \nc{\Adj}{\operatorname{Adj}}
\nc{\Pf}{\operatorname{Pf}}
\theoremstyle{plain}
\newtheorem{theorem}{Theorem}[section]
\newtheorem{proposition}[theorem]{Proposition}
\newtheorem{lemma}[theorem]{Lemma}
\theoremstyle{definition}
\theoremstyle{remark}
\newtheorem{remark}[theorem]{Remark}
\title[A Curve of algebras which are not Einstein Nilradicals]{A Curve of nilpotent Lie algebras which are not Einstein Nilradicals}
\author{Cynthia Will}
\address{FaMAF and CIEM, Universidad Nacional de C\'ordoba, 5000 C\'ordoba, Argentina}
\email{cwill@mate.uncor.edu}
\thanks{2000 {\it Mathematics Subject Classification.} 53C25, 53C30, 22E25. \\
Supported by grants from CONICET, FONCyT and SeCyT (UNC)}
\begin{document}

\maketitle

\begin{abstract}
The only known examples of noncompact Einstein homogeneous spaces are standard solvmanifolds (special solvable Lie groups endowed with a left invariant metric), and according to a long standing conjecture, they might be all. The classification of Einstein solvmanifolds is equivalent to the one of {\it Einstein nilradicals}, i.e. nilpotent Lie algebras which are nilradicals of the Lie algebras of Einstein solvmanifolds.  Up to now, there have been found very few examples of $\NN$-graded nilpotent Lie algebras that can not be Einstein nilradicals. In particular, in each dimension, there are only finitely many known.  We exhibit in the present paper two curves of pairwise non-isomorphic $9$-dimensional $2$-step nilpotent Lie algebras which are not Einstein nilradicals.
\end{abstract}

\section{Introduction}\label{intro}

A Riemannian manifold is called Einstein if its Ricci tensor is a scalar multiple of the metric. The study of  homogeneous Einstein Riemannian manifolds splits in two very different parts, compact and noncompact cases, according to the sign of the Einstein constant. In both cases, existence and non existence results are hard to find.

In the noncompact homogeneous case, all the known examples are {\it solvmanifolds}, that is, a simply connected solvable Lie group $S$ endowed with a
left invariant Riemannian metric.  According to Alekvseevskii's conjecture (see \cite[7.57]{Bss}), they might be all. Moreover, if  $S$ is a solvmanifold with metric Lie algebra $(\sg, \ip)$ (i.e. $\sg$ is the Lie algebra of $S$ and $\ip$ is the inner product determined on $\sg$ by the metric),
consider in $\sg$ the orthogonal decomposition  $\sg = \ag \oplus \ngo,$ where  $\ngo=[\sg,\sg].$ Then the solvmanifold $(S,\ip)$ is called {\it standard} if $[\ag,\ag]=0$. It has been recently proved in \cite{standard} that any   Einstein solvmanifold is standard.

The standard case has been deeply studied by Heber \cite{Hbr}, who has proved many structural and uniqueness results. By results of Lauret \cite{critical} we know that the information to decide if a given solvmanifold is Einstein or not, can be found in its nilradical. A nilpotent Lie algebra $\ngo$ which can be the nilradical (i.e. the maximal
nilpotent ideal) of the Lie algebra of a standard Einstein solvmanifold is called an {\it Einstein nilradical}.  More precisely, $\ngo$ is an Einstein nilradical if and only if $\ngo$ admits an inner product $\ip$ satisfying
\begin{equation}\label{defEn}
\Ricci_{\ip}=cI+D, \qquad\mbox{for some}\; c\in\RR,\; D\in\Der(\ngo),
\end{equation}

\noindent where $\Ricci_{\ip}$ denotes the Ricci operator of the corresponding left invariant metric.  Such metrics are called {\it nilsolitons}, as they are Ricci soliton metrics (i.e. very distinguished solutions to the Ricci flow, see \cite{soliton}).

The classification of Einstein solvmanifolds turns to be equivalent to the one of Einstein nilradicals, which is still wide open.  For quite some time, the only known obstruction for a nilpotent Lie algebra to be an Einstein nilradical
was that it has to admit an $\NN$-{\it gradation} (see \cite{Hbr}). It is also known that up to dimension $6$ every nilpotent Lie algebra is an Einstein nilradical (see \cite{Wll}). Recently, in \cite{lw}, \cite{Nkl1} and \cite{Nkl3}, several examples of $\NN$-graded Lie
algebras which are not Einstein nilradicals have been provided. The smallest dimensional ones are, as expected, in dimension $7$. The first ones, in \cite{lw}, are filiform Lie algebras of dimension $7$ and certain $2$-step nilpotent Lie algebras given by graphs attaining any dimension $\geq 11$. In \cite{Nkl1} Nikolayevsky proved that very few of the free nilpotent Lie algebras are Einstein Nilradicals, and finally in \cite{Nkl3} he studies the case when the nilpotent Lie algebra admits a simple derivation, and finds several non Einstein nilradicals.

However, all these results in \cite{lw,Nkl1,Nkl3} only produce finitely many nilpotent Lie algebras that are not Einstein nilradicals in each dimension. The aim of this paper is to exhibit two curves of pairwise non-isomorphic $9$-dimensional two-step nilpotent Lie algebras which are not Einstein nilradicals, by using tools from geometric invariant theory and moment maps given in \cite{lw}. Both curves have $3$-dimensional center and the simplest one is defined for $t>1$ by
$$\begin{array}{lll}
\tilde{\mu}_t(e_5,e_4)=e_7, & \tilde{\mu}_t(e_1,e_6)=e_8, &  \tilde{\mu}_t(e_3,e_2)=e_9, \\
\tilde{\mu}_t(e_3,e_6)=t\;e_7, & \tilde{\mu}_t(e_5,e_2)=t\;e_8, &  \tilde{\mu}_t(e_1,e_4)=t\;e_9, \\
\tilde{\mu}_t(e_1,e_2)=e_7.
\end{array}
$$

\vs \noindent {\it Acknowledgements.} I would like to thank J. Lauret for his invaluable help.

\section{Preliminaries}\label{pre}

Let $S$ be a standard Einstein solvmanifold with $\sg=\ag\oplus\ngo$ as in Section 1.  For some distinguished element $H\in\ag$, the eigenvalues of $\ad{H}|_{\ngo}$ are all positive integers without a common divisor,
say $k_1<...<k_r$.  If $d_1,...,d_r$ denote the corresponding multiplicities, then
the tuple
$$
(k;d)=(k_1<...<k_r;d_1,...,d_r)
$$
is called the {\it eigenvalue type} of the Einstein solvmanifold $S$. In every dimension, only finitely many eigenvalue types occur. It turns out that $\RR H\oplus\ngo$ is
also an Einstein solvmanifold (with just the restriction of $\ip$ on it). It is then
enough to consider rank-one (i.e. $\dim{\ag}=1$) metric solvable Lie algebras as
every higher rank Einstein solvmanifold will correspond to a unique rank-one
Einstein solvmanifold and certain abelian subalgebra of derivations of $\ngo$
containing $\ad{H}$.  All the above results were proved in \cite{Hbr}.

We fix an inner product vector space
$$
(\sg=\RR H\oplus\RR^n,\ip),\qquad \la H,\RR^n\ra=0,\quad \la H,H\ra=1,
$$
such that the restriction $\ip|_{\RR^n\times\RR^n}$ is the canonical inner product
on $\RR^n$, which will also be denoted by $\ip$.

Note that the metric Lie algebra corresponding to any $(n+1)$-dimensional rank-one
solvmanifold, can be modeled on $(\sg=\RR H\oplus\ngo,\ip)$ for some nilpotent Lie
bracket $\mu$ on $\RR^n$ and some $D\in\Der(\mu)$, the space of derivations of
$(\RR^n,\mu)$.  Indeed, these data define a solvable Lie bracket $[\cdot,\cdot]$ on
$\sg$ by
\begin{equation}\label{solv}
[H,X]=DX,\qquad [X,Y]=\mu(X,Y), \qquad X,Y\in\RR^n,
\end{equation}

\noindent and the solvmanifold is then the simply connected Lie group $S$ with Lie
algebra $(\sg,[\cdot,\cdot])$ endowed with the left invariant Riemannian metric
determined by $\ip$.

For a given $\mu$, there exists a unique symmetric derivation
$D_{\mu}$ (possibly zero) so that the rank-one solvmanifold
$S_{\mu}$, defined by the data $\mu,D_{\mu}$ as in (\ref{solv}),
has a chance of being Einstein among all those metric
solvable extensions of $(\mu,\ip).$  In fact, if we denote by $N_{\mu}$ the simply connected Lie group with Lie algebra $(\RR^n,\mu),$ endowed with the left invariant Riemannian metric determined by $\ip$ and by $\Ric_{\mu}$ the Ricci operator of $N_{\mu}$, then $S_{\mu}$ is Einstein if and only if
\begin{equation}\label{der}
\Ric_{\mu}=c_{\mu}I+D_{\mu}
\end{equation}
\noindent where $c_{\mu}=-4||\mu||^2\tr{\Ric_{\mu}^2}$ (see \cite{soliton} and \cite{critical}).

Note that conversely, any $(n+1)$-dimensional rank-one Einstein solvmanifold is
isometric to $S_{\mu}$ for some nilpotent $\mu$. Thus
the set $\nca$ of all nilpotent Lie brackets on $\RR^n$ parameterizes a space of
$(n+1)$-dimensional rank-one solvmanifolds
$$
\{ S_{\mu}:\mu\in\nca\},
$$
containing all those which are Einstein in that dimension.

If we consider the vector space
$$
V=\lam=\{\mu:\RR^n\times\RR^n\longrightarrow\RR^n : \mu\; \mbox{bilinear and
skew-symmetric}\},
$$
then
$$
\nca=\{\mu\in V:\mu\;\mbox{satisfies Jacobi and is nilpotent}\}
$$
is an algebraic subset of $V$ as the Jacobi identity and the nilpotency condition
can both be written as zeroes of polynomial functions.  There is a natural action of
$\G:=\Gl_n(\RR)$ on $V$ given by
\begin{equation}\label{action}
g.\mu(X,Y)=g\mu(g^{-1}X,g^{-1}Y), \qquad X,Y\in\RR^n, \quad g\in\G,\quad \mu\in V.
\end{equation}

Note that $\nca$ is $\G$-invariant and Lie algebra isomorphism classes are precisely
the $\G$-orbits.

We have that two solvmanifolds $S_{\mu}$ and $S_{\lambda}$ with
$\mu,\lambda\in\nca$ are isometric if and only if there exists $g\in\Or(n)$ such
that $g.\mu=\lambda$ (see \cite[Proposition 4]{critical}), and the same is true for two nilmanifolds $N_{\mu}$ and $N_{\lambda}$ (see
\cite[Appendix]{minimal}), where $\Or(n)$ denotes the subgroup of $\G$ of orthogonal
matrices.  

The canonical inner product $\ip$ on $\RR^n$ defines an $\Or(n)$-invariant inner
product on $V$, denoted also by $\ip$, as follows:
\begin{equation}\label{innV}
\la\mu,\lambda\ra=\sum\limits_{ijk}\la\mu(e_i,e_j),e_k\ra\la\lambda(e_i,e_j),e_k\ra.
\end{equation}

Since any $\mu\in\nca$ is nilpotent,
the Ricci operator of $N_{\mu}$ denoted by $\Ric_{\mu}$ is given by (see \cite[7.38]{Bss}),
\begin{equation}\label{ricci}
\begin{array}{rl}
\la\Ric_{\mu}X,Y\ra=&-\unm\displaystyle{\sum\limits_{ij}}\la\mu(X,e_i),e_j\ra\la\mu(Y,e_i),e_j\ra \\
&+\unc\displaystyle{\sum\limits_{ij}}\la\mu(e_i,e_j),X\ra\la\mu(e_i,e_j),Y\ra,
\end{array}
\end{equation}

\noindent for all $X,Y\in\RR^n$.  We note that the scalar curvature of
$N_{\mu}$ is given by $\scalar(\mu)=\tr{\Ric_{\mu}}=-\unc||\mu||^2$. Formula (\ref{ricci}) can actually be used to define a symmetric operator $\Ric_{\mu}$ for any
$\mu\in V$.

A Cartan decomposition for the Lie algebra $\g$ of $\G$ is given by
$\g=\sog(n)\oplus\sym(n)$, that is, in skew-symmetric and symmetric matrices
respectively.  We use the standard $\Ad(\Or(n))$-invariant inner product on $\g$,
\begin{equation}\label{inng}
\la \alpha,\beta\ra=\tr{\alpha \beta^{\mathrm t}}, \qquad \alpha,\beta\in\g.
\end{equation}

The action of $\g$ on $V$ obtained by differentiation of (\ref{action}) is given by
\begin{equation}\label{actiong}
\pi(\alpha)\mu=\alpha\mu(\cdot,\cdot)-\mu(\alpha\cdot,\cdot)-\mu(\cdot,\alpha\cdot),
\qquad \alpha\in\g,\quad\mu\in V.
\end{equation}

Let $\tg$ denote the set of all diagonal $n\times n$ matrices.  If $\{
e_1',...,e_n'\}$ is the basis of $(\RR^n)^*$ dual to the canonical basis then
$$
\{ v_{ijk}=(e_i'\wedge e_j')\otimes e_k : 1\leq i<j\leq n, \; 1\leq k\leq n\}
$$
is a basis of weight vectors of $V$ for the action (\ref{action}), where $v_{ijk}$
is actually the bilinear form on $\RR^n$ defined by
$v_{ijk}(e_i,e_j)=-v_{ijk}(e_j,e_i)=e_k$ and zero otherwise.  The corresponding
weights $\alpha_{ij}^k\in\tg$, $i<j$, are given by
$$
\pi(\alpha)v_{ijk}=(a_k-a_i-a_j)v_{ijk}=\la\alpha,\alpha_{ij}^k\ra v_{ijk},
\quad\forall \; \alpha=\left[\begin{smallmatrix} a_1&&\\ &\ddots&\\ &&a_n
\end{smallmatrix}\right]\in\tg,
$$
where $\alpha_{ij}^k=E_{kk}-E_{ii}-E_{jj}$ and $\ip$ is the inner product defined in
(\ref{inng}).  As usual $E_{rs}$ denotes the matrix whose only nonzero coefficient
is $1$ in the entry $(r,s)$.

If for $\mu=\sum\mu_{ij}^kv_{ijk}\in\nca$ we fix an enumeration of the set
$\left\{\alpha_{ij}^k:\mu_{ij}^k\ne 0\right\}$, we can define the symmetric matrix
\begin{equation}\label{defU}
U=\left[\left\langle\alpha_{ij}^k,\alpha_{i'j'}^{k'}\right\rangle\right],
\end{equation}

\noindent and state the following result.

\begin{theorem}\cite[Theorem 1]{Pyn}\label{tracy}
Assume that $\mu\in\nca$ satisfies $\Ric_{\mu}\in\tg$.  Then $S_{\mu}$ is Einstein
if and only if
\begin{equation}\label{Ueq}
U\left[(\mu_{ij}^k)^2\right]=\nu [1], \qquad \nu\in\RR,
\end{equation}

\noindent where $\left[(\mu_{ij}^k)^2\right]$ is a column vector in the same order
used in { \rm(\ref{defU})},  and $[1]$ is the column vector
with all entries equal to $1$.
\end{theorem}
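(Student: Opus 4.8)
The plan is to translate the Einstein condition (\ref{der}) into a statement about the weights $\alpha_{ij}^k$. By (\ref{der}), $S_\mu$ is Einstein exactly when $\Ric_\mu=cI+D$ for some $c\in\RR$ and some symmetric $D\in\Der(\mu)$. Because $\Ric_\mu\in\tg$ and $I\in\tg$ by hypothesis, for every $c$ the candidate $D:=\Ric_\mu-cI$ is automatically diagonal, hence symmetric, so all the content of the Einstein condition lies in whether $\Ric_\mu-cI$ is a derivation for a suitable $c$. Moreover this $c$ is forced: since $\pi(I)\mu=-\mu\neq0$, the identity is not a derivation, so a decomposition $\Ric_\mu=cI+D$ with $D\in\Der(\mu)$ is unique whenever it exists and its scalar coincides with the Einstein constant $c_\mu$. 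Hence I may treat $c$ as a free unknown.

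The heart of the argument is to express both sides in the weight basis. Expanding (\ref{ricci}) in the canonical basis, the coefficient $(\mu_{ij}^k)^2$ contributes $-\unm$ to the $(i,i)$ and $(j,j)$ diagonal entries of $\Ric_\mu$ and $+\unm$ to the $(k,k)$ entry; since $\alpha_{ij}^k=E_{kk}-E_{ii}-E_{jj}$, this says precisely
\[
\Ric_\mu=\unm\sum_{i<j,\,k}(\mu_{ij}^k)^2\,\alpha_{ij}^k,
\]
the hypothesis $\Ric_\mu\in\tg$ being exactly what guarantees that the off-diagonal part of (\ref{ricci}) vanishes so that $\Ric_\mu$ equals this diagonal sum. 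On the other hand, the weight relation $\pi(\alpha)v_{ijk}=\la\alpha,\alpha_{ij}^k\ra v_{ijk}$ shows that a diagonal $D$ lies in $\Der(\mu)$ if and only if $\la D,\alpha_{ij}^k\ra=0$ for every triple with $\mu_{ij}^k\neq0$.

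Combining the two, I would pair $\Ric_\mu-cI$ with each weight in the support. Using the inner product (\ref{inng}) one has $\la I,\alpha_{ij}^k\ra=\tr\alpha_{ij}^k=-1$, while the definition (\ref{defU}) of $U$ gives, as column vectors indexed in the order used in (\ref{defU}),
\[
\big(\la\Ric_\mu,\alpha_{ij}^k\ra\big)=\unm\,U\,[(\mu_{ij}^k)^2].
\]
Therefore $D=\Ric_\mu-cI$ is a derivation if and only if the vector $\big(\la D,\alpha_{ij}^k\ra\big)=\unm\,U[(\mu_{ij}^k)^2]+c\,[1]$ is zero, that is, $U[(\mu_{ij}^k)^2]=-2c\,[1]$; setting $\nu=-2c$ yields (\ref{Ueq}), and conversely any solution of (\ref{Ueq}) supplies $c=-\nu/2$ together with the symmetric derivation $D=\Ric_\mu-cI$ realizing the Einstein condition.

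I expect the only genuine difficulty to be the bookkeeping in the second step: keeping track of the $-\unm$ and $+\unc$ coefficients in (\ref{ricci}) and of the double counting of ordered versus unordered index pairs, so as to land cleanly on the factor $\unm$ in front of $(\mu_{ij}^k)^2\alpha_{ij}^k$. Once that identity and the evaluation $\la I,\alpha_{ij}^k\ra=-1$ are secured, everything else is a direct passage through the Gram matrix $U$.
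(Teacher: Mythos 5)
The paper itself contains no proof of this statement: it is quoted directly from \cite[Theorem 1]{Pyn}, so there is no internal argument to compare yours against. Judged on its own, your proof is correct and follows what is in fact the standard route (the one used in \cite{Pyn} and in Section 3 of \cite{lw}): the diagonal part of (\ref{ricci}) is exactly $\unm\sum(\mu_{ij}^k)^2\alpha_{ij}^k$, a diagonal matrix $D$ is a derivation of $\mu$ precisely when $\la D,\alpha_{ij}^k\ra=0$ for every weight in the support of $\mu$, and pairing $\Ric_{\mu}-cI$ with those weights, using $\la I,\alpha_{ij}^k\ra=-1$ and the Gram matrix (\ref{defU}), converts the nilsoliton condition into the linear system (\ref{Ueq}). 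All of these computations check out, including the bookkeeping of the $-\unm$ and $+\unc$ coefficients and the double counting over ordered pairs.

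The one point you should tighten is the treatment of the constant $c$. Equation (\ref{der}) is stated with the specific pair $(c_{\mu},D_{\mu})$, and your uniqueness observation ($I\notin\Der(\mu)$ since $\pi(I)\mu=-\mu$) shows only that a decomposition $\Ric_{\mu}=cI+D$, $D\in\Der(\mu)$, is unique \emph{if} it exists; it does not by itself show that such a decomposition forces $c=c_{\mu}$ and $D=D_{\mu}$, which is what you need to conclude from (\ref{Ueq}) that $S_{\mu}$ is Einstein. The missing line is standard: $\tr(\Ric_{\mu}D)=0$ for every symmetric derivation $D$ (this follows from the moment map identity $4\tr(\alpha\Ric_{\mu})=\la\pi(\alpha)\mu,\mu\ra$ together with $\pi(D)\mu=0$), so taking the trace of $\Ric_{\mu}$ against the decomposition gives $\tr\Ric_{\mu}^2=c\,\tr\Ric_{\mu}$, i.e. $c=c_{\mu}$, and then $D=D_{\mu}$ by uniqueness. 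Alternatively you can simply invoke the soliton theorem of \cite{soliton} and \cite{critical}, quoted in the paper's preliminaries, which states that the existence of \emph{any} decomposition $\Ric_{\mu}=cI+D$ with $D\in\Der(\mu)$ makes $S_{\mu}$ Einstein. Either way this is a one-line repair, not a structural flaw in your argument.
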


Another useful tool comes from the $\G$-invariant stratification for the vector space $V$ with certain frontier properties defined in \cite{standard}.  The strata
are parameterized by a finite set $\bca$ of diagonal $n\times n$ matrices,
$$
V\smallsetminus\{ 0\}=\bigcup\limits_{\beta\in\bca}\sca_{\beta} \qquad \mbox{(disjoint union)}.
$$
For each $\mu \in V$, let us denote by $\beta_\mu \in \tg$ the only minimal norm vector in the convex hull of $\{\alpha_{ij}^k:\mu_{ij}^k\ne 0\}$, and let us consider the Weyl chamber of $\g$ given by
$$
\tg^+=\left\{\left[\begin{smallmatrix} a_1&&\\ &\ddots&\\ &&a_n
\end{smallmatrix}\right]\in\tg:a_1\leq...\leq a_n\right\}.
$$

\begin{theorem}\label{util}\cite[3.1,3.3,3.4]{lw}
\
\begin{itemize}
\item[(i)] If $[c_{ij}^k]$ is any solution to $U[c_{ij}^k]=\nu [1]$,
    $\nu\in\RR$, such that $\sum c_{ij}^k=1$ and all $c_{ij}^k\geq 0$, then
    $\beta_{\mu}=\sum c_{ij}^k\alpha_{ij}^k$ and $\nu=||\beta_{\mu}||^2$.

\item[(ii)] For each convex linear combination $\beta_{\mu}=\sum c_{ij}^k\alpha_{ij}^k$, we define a finite
    set of $\lambda$'s in $V$ associated to $\mu$ by
    $\lambda=\sum\pm\sqrt{c_{ij}^k}v_{ijk}$.  If
    $\Ric_{\lambda}\in\tg$ and $\mu$ degenerates to $\lambda$ (i.e.
    $\lambda\in\overline{\G.\mu}$), then $\mu\in\sca_{\beta}$, for $\beta$ the only
    element in $\tg^+$ conjugate to $\beta_{\mu}$.

\item[(iii)] Let $\mu\in\nca$, $\mu\ne 0$. If $S_{\mu}$ is Einstein then $\mu\in\sca_{\beta}$ for $\beta$ the only element in $\tg^+$ conjugate to $\Ri$.  In such case, the eigenvalue type of $S_{\mu}$ is a positive scalar
multiple of $\beta+||\beta||^2I$.
\end{itemize}
\end{theorem}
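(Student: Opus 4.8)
The plan is to read all three parts through the geometric invariant theory of the $\G$-action on $V$, using that $\Ri$ is a constant multiple of the moment map $m$ of this action and that the stratification $\{\sca_\beta\}_{\beta\in\bca}$ of \cite{standard} is the Morse stratification of $||m||^2$. I will take as given the standard properties of this stratification: each $\sca_\beta$ is $\G$-invariant; the critical points of $||m||^2$ with moment value $m=\beta$ form the core onto which $\sca_\beta$ retracts, so any nonzero $\mu$ with $\pi(m(\mu))\mu\in\RR\mu$ lies in $\sca_\beta$ for $\beta$ the unique $\tg^+$-representative of $m(\mu)$; the frontier property that $\overline{\sca_{\beta'}}$ meets only strata $\sca_\beta$ with $||\beta||\geq||\beta'||$; and the fact that the stratum label is conjugate to the convex-hull minimizer of largest norm along the orbit, so $||\beta_\mu||\leq||\beta'||$ whenever $\mu\in\sca_{\beta'}$. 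Granting these, part (i) is pure convex geometry, while (ii) and (iii) amount to producing the right critical point and transporting the stratum label across a degeneration.

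For (i), set $w=\sum c_{ij}^k\alpha_{ij}^k$. Reading $U[c_{ij}^k]=\nu[1]$ row by row gives $\la\alpha_{ij}^k,w\ra=\nu$ for every weight with $\mu_{ij}^k\neq0$, and hence $||w||^2=\sum c_{ij}^k\la\alpha_{ij}^k,w\ra=\nu$ using $\sum c_{ij}^k=1$. Since $w$ is a convex combination it lies in the convex hull of $\{\alpha_{ij}^k:\mu_{ij}^k\neq0\}$, and for any other point $p=\sum d_{ij}^k\alpha_{ij}^k$ of that hull one gets $\la w,p\ra=\nu=||w||^2$, so $||p-w||^2=||p||^2-||w||^2\geq0$. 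Thus $w$ is the unique minimal norm vector $\beta_{\mu}$ and $\nu=||\beta_{\mu}||^2$.

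For (ii), form $\lambda=\sum\pm\sqrt{c_{ij}^k}\,v_{ijk}$, whose weights are the $\alpha_{ij}^k$ with $c_{ij}^k\neq0$; as $\beta_{\mu}$ already lies in the convex hull of these, it remains their minimal norm vector. Because the $v_{ijk}$ are orthonormal and $\sum c_{ij}^k=1$, for diagonal $\alpha$ one computes $\la\pi(\alpha)\lambda,\lambda\ra/||\lambda||^2=\sum c_{ij}^k\la\alpha,\alpha_{ij}^k\ra=\la\alpha,\beta_{\mu}\ra$, so the hypothesis $\Ric_{\lambda}\in\tg$ (which puts $m(\lambda)=\tfrac{4}{||\lambda||^2}\Ric_\lambda\in\tg$) forces $m(\lambda)=\beta_{\mu}$. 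Moreover part (i) gives $\la\beta_{\mu},\alpha_{ij}^k\ra=||\beta_{\mu}||^2$ on the support, whence $\pi(\beta_{\mu})\lambda=||\beta_{\mu}||^2\lambda$, so $\lambda$ is a critical point of $||m||^2$ and lies in $\sca_\beta$ for $\beta\in\tg^+$ conjugate to $\beta_{\mu}$. It remains to pass from $\lambda$ to $\mu$: writing $\mu\in\sca_{\beta'}$, $\G$-invariance gives $\lambda\in\overline{\G.\mu}\subseteq\overline{\sca_{\beta'}}$, so the frontier property yields $||\beta||\geq||\beta'||$, while the lower bound $||\beta'||\geq||\beta_{\mu}||=||\beta||$ gives the reverse inequality; equality then forces $\beta'=\beta$, i.e.\ $\mu\in\sca_\beta$. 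This transfer is the main obstacle, since it depends on having both the frontier estimate and the matching convex-hull lower bound available for the stratification.

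For (iii), if $S_{\mu}$ is Einstein then by (\ref{der}) we have $\Ric_{\mu}=c_{\mu}I+D_{\mu}$ with $D_{\mu}\in\Der(\mu)$ symmetric; since $\pi(I)\mu=-\mu$ and $\pi(D_{\mu})\mu=0$, this gives $\pi(\Ric_{\mu})\mu=-c_{\mu}\mu$, so $\pi(m(\mu))\mu\in\RR\mu$ and $\mu$ is a critical point of $||m||^2$. Hence $\mu\in\sca_\beta$ with $\beta$ the only element of $\tg^+$ conjugate to $m(\mu)=\Ri$. For the eigenvalue type, recall $\ad H|_{\ngo}=D_{\mu}=\Ric_{\mu}-c_{\mu}I$ and $\Ric_{\mu}=\tfrac{||\mu||^2}{4}m(\mu)$; using the value of $c_{\mu}$ together with $\tr\Ric_{\mu}^2=||\Ric_{\mu}||^2$ one finds $c_{\mu}=-\tfrac{||\mu||^2}{4}||m(\mu)||^2$, so that $D_{\mu}=\tfrac{||\mu||^2}{4}\bigl(m(\mu)+||m(\mu)||^2I\bigr)$. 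As conjugation preserves eigenvalues and norms, $D_{\mu}$ is a positive multiple of $\beta+||\beta||^2I$, and rescaling its eigenvalues to coprime positive integers exhibits the eigenvalue type of $S_{\mu}$ as a positive scalar multiple of $\beta+||\beta||^2I$, as claimed.
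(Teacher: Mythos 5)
The paper contains no proof of Theorem \ref{util} at all---it is imported verbatim from \cite[3.1,3.3,3.4]{lw}---so the only meaningful comparison is with that cited source and with the stratification results of \cite{standard} on which it rests. Your argument is correct and is essentially a reconstruction of that route: (i) is the same convexity computation; (ii) identifies $\lambda$ as a critical point of $\|m\|^2$ with $m(\lambda)=\beta_\mu$ and transfers the stratum label to $\mu$ via $\G$-invariance, the frontier property, and the Kempf--Hesselink-type bound $\|\beta_\mu\|\le\|\beta'\|$ (all genuine theorems of the stratification in \cite{standard}, which you correctly flag as imported rather than reproved); and (iii) is the standard moment-map criticality argument for nilsolitons, where your formula $c_\mu=-\tfrac{\|\mu\|^2}{4}\|m(\mu)\|^2$ is in fact the corrected version of the paper's misprinted $c_{\mu}=-4\|\mu\|^2\tr\Ric_{\mu}^2$.
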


Note that from this, if one knows which stratum $\sca_{\beta}$ $\mu$ belongs to (for example by using (i)), one has the eventual eigenvalue type of $S_{\mu}$, and from there one should be able to decide if $S_{\mu}$ is an Einstein solvmanifold or not by using for example Lie theory tools.

We will finally recall some results on $2$-step nilpotent Lie algebras we are going to use. We refer to \cite{ratform} and the references therein.  Consider an orthogonal decomposition $\RR^n=\vg \oplus \zg$, and let $\ngo_{\mu}=(\RR^n,\mu)$, $\mu \in \mathcal{N}$ be a $2$-step nilpotent Lie algebra such that $\mu(\vg,\vg)=\zg$. We will say that the {\it type} of $\ngo_{\mu}$ is $(n_1,n_2)$ if $\dim \vg=n_1$ and $\dim \zg=n_2$.

For each $w\in\zg$ consider the linear transformation $J_\mu(w):\vg\longrightarrow\vg$ defined by
\begin{equation}\label{jota}
\la J_\mu(w)v_1,v_2\ra=\la\mu(v_1,v_2),w\ra, \qquad\forall\; v_1,v_2\in\vg.
\end{equation}

Recall that $J_\mu(w)$ is skew symmetric with respect to $\ip$ and the map
$J_\mu:\zg\longrightarrow\sog(n_1,\RR)$ is linear.  If $n_1$ is even, one can define the {\it Pfaffian form} $f_\mu:\zg \mapsto \RR$ of the $2$-step nilpotent Lie algebra $\ngo_{\mu}$ by
$$
f_\mu(w)=\Pf(J_\mu(w)), \qquad w\in \zg,
$$
where $\Pf:\sog(\vg)\mapsto \RR$ is the {\it Pfaffian}, that is, the only polynomial
function on $\sog(\vg)$ satisfying $\Pf(B)^2=\det{B}$ for all $B\in\sog(n_1)$ and $\Pf(J)=1$ for some fixed $J\in\sog(\vg)$.  Recall that $f_{\mu}$ is a homogeneous polynomial on $n_2$ variables of degree $n_1/2$.

Let $\ngo_{\mu'}=(\RR^n,\mu')$ be another $2$-step nilpotent Lie algebra satisfying $\mu'(\vg,\vg)=\zg$.  Then $\ngo_{\mu}$ and $\ngo_{\mu'}$ are isomorphic if and only if there exist invertible $A_1:\vg\longrightarrow\vg$ and $A_2:\zg\longrightarrow\zg$ such that
\begin{equation}\label{iso}
A_1^tJ_{\mu'}(w)A_1=J_{\mu}(A_2^tw), \qquad\forall\; w\in\zg.
\end{equation}

It follows that if $\ngo_{\mu}$ and $\ngo_{\mu'}'$ are isomorphic, then their Pfaffian forms are {\it projectively equivalent}, that is, there exists $A\in\Gl_{n_2}(\RR)$ and $c\in \RR^*$
such that
$$
f_{\mu}(w)=cf_{\mu'}(Aw), \qquad\forall\; w\in\zg.
$$
In particular, for $2$-step nilpotent Lie algebras of type $(6,3)$, which is the case we are going to study in the present paper, the Pfaffian form is a homogeneous polynomial of degree $3$ in $3$ variables. Two of such polynomials $f,g$ are then projectively equivalent if there exist $A\in\Gl_3(\RR)$ and $c\in \RR^*$
such that
$$
f(x,y,z)=c\;g(A[x,y,z]^t).
$$
In that case, we note that
\begin{equation}\label{zeros}
AZ(f)=Z(g),
\end{equation}
where $Z(f):=\{(x,y,z):\in\RR^3:f(x,y,z)=0\}$.

\section{A curve of Einstein nilradicals}

We will introduce in this section a curve of Einstein nilradicals we are going to need later on. Let us consider the following family of $2$-step nilpotent Lie algebras. If $\{ e_1, \dots , e_9\}$ denote the canonical basis of $\RR^9$, for any $t \in \mathbb{R}$ let $\ngo_t=(\RR^9, \mu_t)$ be the Lie algebra with Lie bracket given by
\begin{equation}\label{fam2}
\begin{array}{lll}
\mu_t(e_5,e_4)=e_7, & \mu_t(e_1,e_6)=e_8, & \mu_t(e_3,e_2)=e_9,\\

\mu_t(e_3,e_6)=t\;e_7, & \mu_t(e_5,e_2)=t\;e_8, & \mu_t(e_1,e_4)=t\;e_9.
\end{array}
\end{equation}

It is easy to see that all of them are $2$-step nilpotent Lie algebras of type $(6,3),$
where $\{e_1, \dots, e_6\}$ and $\{e_7,e_8,e_9 \}$ are basis of $\vg$ and $\zg$ respectively, as in Section 2.  Recall that we will always have the canonical inner product on $\RR^9$ fixed.

\begin{lemma}\label{En} $\ngo_t$ is an Einstein nilradical for any $t \in \RR$.
\end{lemma}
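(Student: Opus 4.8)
The plan is to verify the nilsoliton condition \eqref{defEn} directly, exploiting the highly structured, symmetric form of the brackets \eqref{fam2}. First I would compute the Ricci operator $\Ric_{\mu_t}$ using formula \eqref{ricci}. Since $\ngo_t$ is $2$-step nilpotent of type $(6,3)$ with $\vg=\operatorname{span}\{e_1,\dots,e_6\}$ and $\zg=\operatorname{span}\{e_7,e_8,e_9\}$, the Ricci operator is block-diagonal with respect to this decomposition, and on each block it is determined by the maps $J_{\mu_t}(w)$ from \eqref{jota}. The key observation is the cyclic symmetry visible in \eqref{fam2}: the six nonzero structure constants come in two orbits under the permutation $(e_1 e_3 e_5)(e_2 e_4 e_6)(e_7 e_8 e_9)$ (roughly $e_7\leftrightarrow e_8\leftrightarrow e_9$ with corresponding shifts in $\vg$), so I expect $\Ric_{\mu_t}$ to be constant ($=a$, say) on $\zg$ and to take a common value ($=b$) on each of the pairs $\{e_1,\dots,e_6\}$, or at most two distinct eigenvalues on $\vg$. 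A short calculation from \eqref{ricci} should give these eigenvalues explicitly as functions of $t$.

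Next I would produce the required diagonal derivation. Because each generator $e_k$ for $k\in\{7,8,9\}$ appears in $\vg$-brackets with a consistent weight pattern, I expect $\Ric_{\mu_t}$ to already be diagonal in the canonical basis, so I can invoke Theorem \ref{tracy}: it suffices to check the linear-algebra condition \eqref{Ueq} that $U[(\mu_{ij}^k)^2]=\nu[1]$ for the matrix $U$ in \eqref{defU} built from the six weights $\alpha_{ij}^k$ associated to the nonzero brackets. Writing out these six weights $\alpha_{54}^7, \alpha_{16}^8, \alpha_{32}^9, \alpha_{36}^7, \alpha_{52}^8, \alpha_{14}^9$ as the explicit diagonal matrices $E_{kk}-E_{ii}-E_{jj}$, I would compute the $6\times6$ Gram matrix $U=[\langle\alpha_{ij}^k,\alpha_{i'j'}^{k'}\rangle]$ using the inner product \eqref{inng}. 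The diagonal entries are all $\langle\alpha,\alpha\rangle=3$, and off-diagonal entries take values in $\{-1,0,1\}$ depending on how many index positions the two weights share. By the cyclic symmetry, $U$ should be a circulant-like matrix, and I would then solve $U\,c=\nu[1]$: the symmetry strongly suggests the uniform vector $c=\lambda[1]$ is a solution, in which case $\nu$ equals the common row-sum of $U$ and the nilsoliton is immediate for \emph{all} $t$. The corresponding squared structure constants are $(\mu_{ij}^k)^2\in\{1,1,1,t^2,t^2,t^2\}$, so if the row-sums split compatibly with the two weight-orbits I would instead solve for $c$ with two values and read off $\nu=\|\beta_{\mu_t}\|^2$ via Theorem \ref{util}(i).

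The main obstacle I anticipate is twofold. First, one must confirm that $\Ric_{\mu_t}$ genuinely lands in $\tg$ (is diagonal) so that Theorem \ref{tracy} applies cleanly; if a few off-diagonal Ricci terms survive, I would first apply an element of $\Or(9)$ (which preserves the isometry class and hence the Einstein-nilradical property) to diagonalize, checking that this does not disturb the weight structure. Second, and more substantively, I must ensure that the solution vector $[c_{ij}^k]$ to \eqref{Ueq} can be taken with \emph{all entries nonnegative} when I want to identify the stratum via Theorem \ref{util}(i), and that the resulting scaling is consistent for every $t\in\RR$ rather than only generically. Since the $t$-dependence enters only through the squared constants $t^2$ on one weight-orbit, I expect the Einstein condition \eqref{der}, equivalently \eqref{Ueq}, to hold identically in $t$ once the symmetric derivation $D_{\mu_t}$ is chosen to have the two eigenvalues dictated by $\beta_{\mu_t}+\|\beta_{\mu_t}\|^2 I$; verifying this uniform solvability is the crux of the argument.
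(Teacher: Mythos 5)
Your plan is correct and follows essentially the same route as the paper: the paper's own proof computes $\Ric_{\mu_t}$ from (\ref{ricci}) and exhibits it as $cI+D$ with $D$ a diagonal derivation, and then immediately records the alternative you develop, namely that with $U$ as in (\ref{defU}) the vector $[1,1,1,t^2,t^2,t^2]^t$ of squared structure constants satisfies $Uv=(3+3t^2)[1]$, so Theorem \ref{tracy} applies for every $t$. Your extra care about the hypothesis $\Ric_{\mu_t}\in\tg$ is well placed (it does hold, since distinct basis vectors of $\vg$ never bracket against a common $e_i$ into a common $e_j$), while the nonnegativity concern is vacuous here because the entries of the relevant vector are squares of structure constants.
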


\begin{proof}
By computing the Ricci operator, one can see that $(\RR^9,\mu_t)$ are Einstein nilradicals for any $t$. In fact, by (\ref{ricci}), straightforward calculation shows that $\Ric_{\mu_t}$ is given by
$$
\Ric_{\mu_t} -\frac{3+3t^2}{2} I+ \left[\begin{smallmatrix}
1+t^2&&&&&\\
&\ddots&&&&\\
&&1+t^2&&&\\
&&&2+2t^2&&\\
&&&&2+2t^2&\\
&&&&&2+2t^2
\end{smallmatrix}\right],
$$
and therefore, since the matrix on the right is easily seen to be a derivation of $\ngo_t$, we have that (\ref{defEn}) holds for $\ip$.
\end{proof}

Another way to prove this fact is by checking that $\mu_t$ satisfies the system given in (\ref{Ueq}), since in this case, with the ordering
$$\{\mu_{5,4}^7,\mu_{1,6}^8,\mu_{3,2}^9,\mu_{3,6}^7,\mu_{5,2}^8,\mu_{1,4}^9\},$$
$U$ is given by
$$
U:=\left[\begin{smallmatrix}
3&&&1&1&1\\
&3&&1&1&1\\
&&3&1&1&1\\
1&1&1&3&&\\
1&1&1&&3&\\
1&1&1&&&3
\end{smallmatrix}\right],
$$
\noindent and then $v=[1,1,1,t^2,t^2,t^2]^t$ satisfies $U.v = (3+3t^2) [1]$.  It then follows form this, by using Theorem \ref{util} (ii) (or alternatively (iii)), that $\mu_t \in S_\beta$ where
$$
\beta=\beta_{\mu_t}=\sum (\mu_t)_{ij}^k\alpha_{ij}^k
=\frac{1}{3}(-1,-1,-1,-1,-1,-1,1,1,1),
$$
and $S_{\mu_t}$ is an Einstein solvmanifold of eigenvalue type $(1<2;6,3)$ for all $t>0$.  

A straightforward computation shows that the Pfaffian form of $\ngo_t$ is given by
\begin{equation}\label{pffam2}
f_{\mu_t}(x,y,z)=(t+1)(t^2-t+1)xyz.
\end{equation}

We note that this family of Lie algebras is studied in \cite[Section 4]{GT}, where the base field is $\CC$, and with their notation these algebras are contained in their Family 2 and given by
$$
\mu_t=t.\mu_2+1.\mu_3.
$$
Therefore, as it is proved there, for $t,s\in\CC$, $t^3\ne\pm s^3$, $\mu_t$ and $\mu_{s}$ are non isomorphic as Lie algebras. Anyway, we will give an alternative proof of a little less stronger fact, that is enough for our purpose.

\begin{lemma}\label{isom} If $t,s \in (1,\infty)$ then $\ngo_t$ is isomorphic to $\ngo_s$ if and only if $t=s$.
\end{lemma}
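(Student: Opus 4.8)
```latex
\textbf{Proof proposal.}
The plan is to use the invariant attached to a $2$-step nilpotent Lie
algebra that is most rigid under isomorphism, namely the Pfaffian form,
together with the projective equivalence criterion recorded
in~\eqref{iso}--\eqref{zeros}. By~\eqref{pffam2} the Pfaffian form of
$\ngo_t$ is $f_{\mu_t}(x,y,z)=(t+1)(t^2-t+1)\,xyz$, a nonzero scalar
multiple of $xyz$ for every $t\in(1,\infty)$. Since all these forms are
scalar multiples of the single form $xyz$, the Pfaffian by itself cannot
distinguish $\ngo_t$ from $\ngo_s$, so the first thing I would stress is
that projective equivalence of Pfaffians is only a \emph{necessary}
condition and here it is automatically satisfied. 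Thus the whole content
of the Lemma lies in extracting a \emph{finer} invariant from the
defining data~\eqref{jota} of the map $J_{\mu_t}$.

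First I would set up the explicit isomorphism condition. Suppose
$\ngo_t\cong\ngo_s$. Because both algebras have the prescribed type
$(6,3)$ with $\vg=\mathrm{span}\{e_1,\dots,e_6\}$ and
$\zg=\mathrm{span}\{e_7,e_8,e_9\}$, by~\eqref{iso} there exist invertible
$A_1:\vg\to\vg$ and $A_2:\zg\to\zg$ with
$A_1^{t}J_{\mu_s}(w)A_1=J_{\mu_t}(A_2^{t}w)$ for all $w\in\zg$. The zero
set of the Pfaffian $xyz$ is the union of the three coordinate planes in
$\zg^*$, so by~\eqref{zeros} the linear map $A_2$ (up to the
normalization and the choice of coordinates on $\zg$) must permute these
three planes; equivalently $A_2^{t}$ permutes, up to scaling, the three
coordinate directions $e_7,e_8,e_9$. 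This drastically restricts $A_2$ to
a monomial matrix (a permutation times a diagonal). I would then feed
each of the finitely many admissible permutations back into~\eqref{iso}
and ask whether a compatible $A_1$ can exist.

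The key computation is to compare, for each coordinate direction $w$, the
skew-symmetric map $J_{\mu_t}(w)$ with $J_{\mu_s}(w)$ as elements of
$\sog(6)$. From~\eqref{jota} and the bracket~\eqref{fam2}, each
$J_{\mu_t}(e_k)$ for $k=7,8,9$ is a rank-$4$ skew matrix built from a
pair of $2\times2$ blocks whose entries are $1$ and $t$ (the off-diagonal
$t$ coming from the second row of~\eqref{fam2}). The relation
$A_1^{t}J_{\mu_s}(w)A_1=J_{\mu_t}(A_2^{t}w)$ is a congruence of skew
forms, and congruence preserves the genuine numerical invariant here: the
ratio of the two nonzero ``speeds'' $1$ and $t$ appearing in each
$J_{\mu_t}(e_k)$ is encoded in a pencil/cross-ratio type invariant of the
pair of forms that congruence cannot alter. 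Concretely, for each of the
three central directions the pair $(1,t)$ determines an unordered pair
$\{1:t\}$ up to the scaling absorbed by $A_2$ and $c$, and matching these
across all three directions forces $t=s$ (for $t,s>1$ the ambiguity
$t\mapsto 1/t$ created by swapping the two blocks is excluded, which is
exactly why the range $(1,\infty)$ is imposed and why this statement is
weaker than the $t^3\ne\pm s^3$ result of~\cite{GT}). So the concrete
task is to write out these three $6\times6$ blocks, impose the congruence
for each admissible monomial $A_2$, and read off the scalar constraints.

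The converse direction $t=s\Rightarrow\ngo_t\cong\ngo_s$ is trivial
(the identity map), so only the forward implication carries content. The
main obstacle I anticipate is the bookkeeping in the middle step:
reducing $A_2$ to a monomial matrix via~\eqref{zeros} is clean, but one
must then check that the residual freedom in $A_1$ (block structure
respecting the three $2$-dimensional ``planes'' of the bracket) does not
secretly allow a hidden identification for $t\ne s$. In other words the
delicate point is to verify that the only congruences realizing the
permuted $J$'s are the diagonal/permutation ones, and that these force
the scalar $(t+1)(t^2-t+1)$ \emph{together with} the internal ratio $t$
to coincide; restricting to $(1,\infty)$ is precisely the device that
removes the spurious $t\leftrightarrow 1/t$ (and any sign) ambiguity and
pins down $t=s$ uniquely.
```
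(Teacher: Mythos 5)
There is a genuine gap, and it sits exactly at the step you yourself flag as ``the delicate point.'' Your reduction of $A_2$ to a monomial matrix via \eqref{zeros} is fine, but the pivotal claim that follows it --- that the internal ratio $1:t$ appearing in each $J_{\mu_t}(e_k)$ is ``a genuine numerical invariant \ldots{} that congruence cannot alter'' --- is false for a single form. By the Darboux normal form, \emph{any} two skew-symmetric forms of the same rank on $\RR^6$ are congruent; in particular $J_{\mu_s}(e_7)=E_{54}-E_{45}+s(E_{36}-E_{63})$ is congruent to $c\,J_{\mu_t}(e_7)$ for all $s,t>0$ and any $c\ne 0$ (rescale the two $2$-planes $\mathrm{span}\{e_4,e_5\}$ and $\mathrm{span}\{e_3,e_6\}$ separately and $t$ is absorbed). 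So once $A_2$ is monomial, the three conditions $A_1^tJ_{\mu_s}(e_k)A_1=c_kJ_{\mu_t}(e_{\sigma(k)})$ taken \emph{one $k$ at a time} are vacuous; all the content lies in whether a single $A_1$ can satisfy the three congruences \emph{simultaneously}. That is a problem about triples of skew forms under simultaneous congruence, and your proposal never constructs an invariant for it (the ``pencil/cross-ratio type invariant'' is named but not defined, and the Pfaffian of the pencil, the obvious candidate, is $xyz$ up to scalar for every $t$, so it sees nothing). In short, the hard part of the lemma is deferred, not proved. The paper's own remark after the proof warns of exactly this: there is no evident Lie-theoretic/linear-algebraic invariant separating these algebras.

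The paper escapes this by a route you do not use at all: Riemannian geometry via Lemma \ref{En}. Since every $\ngo_t$ is an Einstein nilradical, the uniqueness (up to isometry) of nilsoliton metrics (\cite[Appendix]{minimal}) implies that if $\ngo_t\cong\ngo_s$ then the \emph{normalized} brackets $\lambda_t=\mu_t/\|\mu_t\|$ and $\lambda_s$ are isomorphic by an \emph{orthogonal} map, i.e.\ one may take $A_1\in\Or(6)$, $A_2\in\Or(3)$ in \eqref{iso}. Orthogonality is what creates an honest invariant: $w\mapsto\tr J_{\lambda_t}(w)^4$ is then carried to $w\mapsto \tr J_{\lambda_s}(w)^4$ composed with $A_2^{-1}$, and since $A_2$ preserves the unit sphere $S\subset\zg$, the maxima over $S$ must agree. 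A direct computation gives
\begin{equation*}
\max_{w\in S}\tr J_{\lambda_t}(w)^4=\frac{t^2}{9(1+t^2)^2}+\frac{(1-t^2)^2}{18(1+t^2)^2}
=\frac{1+t^4}{18(1+t^2)^2},
\end{equation*}
and $t\mapsto (1+t^4)/(1+t^2)^2$ is injective on $(1,\infty)$ (this, not a block-swapping ambiguity in a congruence, is where the hypothesis $t,s>1$ enters). Hence $t=s$. If you want to salvage a purely algebraic argument along your lines, you would need a genuine invariant of the \emph{triple} $(J(e_7),J(e_8),J(e_9))$ under $A_1^t(\cdot)A_1$, e.g.\ eigenvalue data of $J(w_0)^{-1}J(w)$ for generic $w_0$; but that is a substantially harder computation than the one the paper performs, and nothing in your write-up carries it out.
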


\begin{proof}
Since $||\mu_t||^2 = 6(t^2+1)$, let us denote by $\lambda_t= \frac{\mu_t}{||\mu_t||}$, the normalization of $\mu_t$. It is not hard to see that
$$
\tr J_{\lambda_t}(w)^4 = \frac{1+t^4}{18(1+t^2)^2}(z^4+x^4+y^4)+\frac{4t^2}{18(1+t^2)^2}(x^2y^2+x^2z^2+y^2z^2),
$$
\noindent where $w=xe_7+ye_8+ze_9 \in \zg.$

Let us assume that $\ngo_t$ is isomorphic to $\ngo_s$, or equivalently, $(\RR^n,\lambda_t)$ is isomorphic to $(\RR^n,\lambda_s)$, $t,s\in (0,\infty)$.  Since both are Einstein nilradicals by Lemma \ref{En}, we know that
there is an orthogonal isomorphism between $\lambda_t$ and $\lambda_s$ (see \cite[Appendix]{minimal}).  Therefore, by (\ref{iso})
$$
A_1^{-1}J_{\lambda_s}(w)A_1=J_{\lambda_t}(A_2^{-1}w), \qquad\forall w \in \zg,
$$
for some $A_1 \in \Or(6)$ and $A_2 \in \Or(3)$.  From here, it is clear that
\begin{equation}\label{traza}
 \tr(J_{\lambda_s})(w)^4= \tr(J_{\lambda_t}(A_2^{-1}w))^4, \qquad \forall w\in\zg.
\end{equation}

Since $A_2  \in \Or(3)$ we have that $A_2(S)=S$, where $S$ is the sphere
$$
S=\{w=xe_7+ye_8+ze_9\in \zg: (x^2+y^2+z^2)=1\}.
$$
Therefore if we denote by $g_t(w)=\tr(J_{\lambda_t}(w))^4$ then $g_t(S)=g_s(S)$ by (\ref{traza}), and thus the maximum values of $g_t$ and $g_s$ restricted to $S$ must coincide.

Now, if $w=xe_7+ye_8+ze_9\in S$, it is easy to see that
$$
g_t(x,y,z) = \frac{t^2}{9(1+t^2)^2}+\frac{(1-t^2)^2}{18(1+t^2)^2}(x^4+y^4+z^4).
$$
Moreover, straightforward calculation shows that the maximum value of $g_t$ on $S$ is given by
$$M_t = \frac{t^2}{9(1+t^2)^2}+\frac{(1-t^2)^2}{18(1+t^2)^2},$$
and thus
$$
\frac{1+t^4}{18(1+t^2)^2}=\frac{1+s^4}{18(1+s^2)^2}.
$$
It is easy to see that $\frac{1+t^4}{18(1+t^2)^2}$ is an even  function which is injective in the interval $(1,\infty)$, which implies that $t=s$, as we wanted to show.
\end{proof}

\begin{remark} We are strongly using in the above proof the fact that these algebras are Einstein nilradicals for any $t \in \RR$, which may be seen as an application of Riemannian geometry to the classification problem of Lie algebras. Recall that there is no any evident invariant from Lie theory to distinguish these Lie algebras.
\end{remark}

\section{A curve of non Einstein Nilradicals}

Consider now a new family of nilpotent Lie algebras of the same type, coming from adding a new bracket to the family of Lie algebras $\ngo_t$ defined in (\ref{fam2}).
Let $\tilde{\ngo}_t$ be the Lie algebra given by $\tilde{\ngo}_t=(\RR^9, \tilde{\mu}_t)$ where
\begin{equation}\label{fam21}
\begin{array}{lll}
\tilde{\mu}_t(e_5,e_4)=e_7, & \tilde{\mu}_t(e_1,e_6)=e_8, &  \tilde{\mu}_t(e_3,e_2)=e_9, \\
\tilde{\mu}_t(e_3,e_6)=t\;e_7, & \tilde{\mu}_t(e_5,e_2)=t\;e_8, &  \tilde{\mu}_t(e_1,e_4)=t\;e_9, \\
\tilde{\mu}_t(e_1,e_2)=e_7.
\end{array}
\end{equation}

We begin by noting that for any $t$ these are also two-step nilpotent Lie algebras of type $(6,3)$.
In this case the Pfaffian form is easily seen to be
\begin{equation}\label{pffam21}
f_{\tilde{\mu}_t}(x,y,z)=tx^3-(t^3+1)xyz.
\end{equation}

\begin{lemma}\label{isomor} For any $t\in(1,\infty)$, $\ngo_t$ and $\tilde{\ngo}_t$ are not isomorphic as Lie algebras.
\end{lemma}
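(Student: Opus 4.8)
The plan is to derive a contradiction from the projective invariant carried by the Pfaffian form. Recall from Section~\ref{pre} that if $\ngo_t$ and $\tilde{\ngo}_t$ were isomorphic, then their Pfaffian forms $f_{\mu_t}$ and $f_{\tilde{\mu}_t}$ would be projectively equivalent, so there would exist $A\in\Gl_3(\RR)$ and $c\in\RR^*$ with $f_{\mu_t}(x,y,z)=c\,f_{\tilde{\mu}_t}(A[x,y,z]^t)$. The quantity I would compare across this equivalence is the factorization of each cubic into irreducible real polynomials: an invertible linear substitution sends irreducible factors to irreducible factors of the same degree, and rescaling by $c$ changes nothing, so the multiset of degrees of the irreducible factors is a projective invariant.

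First I would record the two forms from (\ref{pffam2}) and (\ref{pffam21}). Since $t\in(1,\infty)$ we have $(t+1)(t^2-t+1)\neq 0$, so $f_{\mu_t}$ is a nonzero scalar multiple of $xyz$, that is, a product of three distinct linear forms. On the other hand $f_{\tilde{\mu}_t}=x\bigl(tx^2-(t^3+1)yz\bigr)$ is the product of the linear form $x$ with the quadratic form $q(x,y,z)=tx^2-(t^3+1)yz$.

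The crux is to show that $q$ is irreducible over $\RR$. For this I would note that the symmetric matrix of $q$ has determinant a nonzero multiple of $-t(t^3+1)^2$, which does not vanish for $t>1$; hence $q$ is a nondegenerate (rank three) quadratic form in three variables, and such a form cannot be a product of two linear forms, since the matrix of a product $\ell_1\ell_2$ is a sum of two rank-one matrices and thus has rank at most two. Consequently $f_{\tilde{\mu}_t}$ factors as (linear)$\times$(irreducible quadratic), whereas $f_{\mu_t}$ factors as (linear)$\times$(linear)$\times$(linear). By unique factorization in $\RR[x,y,z]$ these two factorization types are incompatible, so the cubics cannot be projectively equivalent, contradicting the assumed isomorphism.

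Equivalently, and perhaps more transparently, one may phrase the obstruction through the zero sets via (\ref{zeros}): $Z(f_{\mu_t})$ is a union of three planes, while $Z(f_{\tilde{\mu}_t})$ is the union of a single plane and an irreducible quadric cone, and no element of $\Gl_3(\RR)$ can carry one configuration onto the other. In either formulation the argument is complete once the irreducibility of $q$ is established, and that is the only point requiring computation; it is the main (and quite mild) obstacle, reducing to the nonvanishing of a single $3\times 3$ determinant for $t>1$.
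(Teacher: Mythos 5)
Your proposal is correct and follows essentially the same route as the paper: both reduce the problem to showing that the two Pfaffian cubics $f_{\mu_t}$ and $f_{\tilde{\mu}_t}$ are not projectively equivalent, the paper by comparing their zero sets (three planes versus a plane together with the irreducible cone $x^2+yz=0$) and you by comparing factorization types in $\RR[x,y,z]$. Your rank-three argument for the irreducibility of the quadratic factor $tx^2-(t^3+1)yz$ is a clean way of making precise the step the paper leaves implicit, namely that the quadric component is not a union of planes, so no invertible linear map can match the two configurations.
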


\begin{proof}
As we have said, if two nilpotent Lie algebras are isomorphic then its Pfaffian forms are projectively equivalent. In this situation, for each fixed $t$ straightforward calculation shows that $f_{\mu_t}$ and $f_{\tilde{\mu}_t}$ are projectively equivalent to $P=xyz,$ and $\tilde{P}=xyz+x^3,$ respectively.  Let $Z$ and $\tilde{Z}$ denote the set of zeros of $P$ and $\tilde{P}$, respectively. It is easy to see that $Z$ is the union of three planes in $\RR^3$ defined by $x=0$, $y=0$ and $z=0$. On the other hand, $\tilde{Z}$ is the union of the plane $x=0$ and the algebraic variety $x^2+yz=0$, and therefore one can not be the image of the other by a linear map. Hence, by (\ref{zeros}), the associated Lie algebras can not be isomorphic, concluding the proof (see \cite[Section 10.3]{D} for an alternative proof of the fact that these two polynomials are not projectively equivalent).
\end{proof}

The above lemma also follows from \cite{GT}. Indeed, with their notation, $\mu_t$ corresponds to Family $2$ and  $\tilde{\mu}_t$ corresponds to Family $2$ with nilpotent part 2 (see \cite[Section 4.1, Table 3]{GT}).

\begin{proposition}\label{noneinstein}  $\tilde{\ngo}_t$ is not an Einstein nilradical for any $t\in(1,\infty)$.
\end{proposition}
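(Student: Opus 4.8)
The plan is to use the stratification machinery from Theorem~\ref{util} to identify the stratum $\sca_\beta$ containing $\tilde{\mu}_t$, extract the only possible eigenvalue type of a hypothetical Einstein solvmanifold $S_{\tilde{\mu}_t}$, and then derive a contradiction with the structural constraints that an actual Einstein solvmanifold must satisfy. The strategy mirrors exactly the one suggested in the remark following Theorem~\ref{util}: first pin down the stratum, then rule out the eigenvalue type by Lie-theoretic or combinatorial means.

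\medskip

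First I would compute $\Ric_{\tilde{\mu}_t}$ via (\ref{ricci}) and check whether it is diagonal, i.e.\ whether $\Ric_{\tilde{\mu}_t}\in\tg$. The extra bracket $\tilde{\mu}_t(e_1,e_2)=e_7$ interacts with the existing brackets through $J_{\tilde{\mu}_t}(e_7)$, so I expect $\Ric_{\tilde{\mu}_t}$ to have off-diagonal entries; if so, I would conjugate $\tilde{\mu}_t$ by a suitable $g\in\Or(9)$ so that the Ricci operator becomes diagonal, which is permissible since the isometry class, hence the property of being an Einstein nilradical, is preserved by the $\Or(n)$-action. Once $\Ric\in\tg$ is arranged, the next step is to assemble the matrix $U$ from (\ref{defU}) for the now seven nonzero structure constants indexed by $\{\alpha^7_{5,4},\alpha^8_{1,6},\alpha^9_{3,2},\alpha^7_{3,6},\alpha^8_{5,2},\alpha^9_{1,4},\alpha^7_{1,2}\}$, solve $U[c]=\nu[1]$ for a nonnegative solution summing to $1$ (invoking Theorem~\ref{util}(i)), and thereby read off $\beta_{\tilde{\mu}_t}=\sum c^k_{ij}\alpha^k_{ij}$ and the candidate eigenvalue type, which by Theorem~\ref{util}(iii) must be a positive multiple of $\beta+\|\beta\|^2 I$.

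\medskip

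With the eigenvalue type in hand, the final step is to show no inner product realizes it as an Einstein metric. The cleanest route is to suppose for contradiction that $S_{\tilde{\mu}_t}$ is Einstein; then Theorem~\ref{util}(iii) forces $\tilde{\mu}_t\in\sca_\beta$ for the specific $\beta$ computed above, and moreover the eventual Einstein metric would have to make $\tilde{\mu}_t$ a \emph{nilsoliton} satisfying (\ref{der}). I would then use the fact established in Lemma~\ref{isomor} that $\tilde{\ngo}_t$ is genuinely distinct from the Einstein nilradical $\ngo_t$: since both algebras lie in the same stratum $\sca_\beta$ and share the eigenvalue type $(1<2;6,3)$, but only $\ngo_t$ admits a nilsoliton, I can try to show that within this stratum the uniqueness/minimality properties of the moment map (the characterization via $\beta_{\tilde{\mu}_t}$ being the minimal-norm vector, together with the soliton equation forcing $\tilde{\mu}_t$ to be a critical point of the norm on its orbit) cannot be met. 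Concretely, I expect to exploit that an Einstein nilradical must have $\tilde{\mu}_t$ in the closed orbit direction dictated by $\beta$, and that the presence of the term $x^3$ in the Pfaffian form (\ref{pffam21})—which prevents $Z(f_{\tilde{\mu}_t})$ from splitting into three hyperplanes—obstructs the symmetry needed for the nilsoliton equation to have a solution of the required type.

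\medskip

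I expect the main obstacle to be the last step: merely locating the stratum and the candidate eigenvalue type is routine linear algebra, but \emph{excluding} the existence of a nilsoliton is the genuine content. The difficulty is that having the correct $\NN$-gradation and eigenvalue type is necessary but not sufficient, so I cannot conclude non-existence from the stratification alone; I will need an additional argument—most plausibly showing that the only $\beta$-critical points (minimal-norm representatives in $\overline{\Gl_9(\RR).\tilde{\mu}_t}$ within the stratum) are precisely the isomorphism class of $\ngo_t$ rather than $\tilde{\mu}_t$, so that $\tilde{\mu}_t$ degenerates \emph{to} the nilsoliton $\mu_t$ but is not itself isomorphic to it. Reconciling ``$\tilde{\mu}_t$ lies in $\sca_\beta$'' with ``$\tilde{\mu}_t$ admits no nilsoliton'' via the non-isomorphism of Lemma~\ref{isomor} is, I anticipate, where the real work lies.
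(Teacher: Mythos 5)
Your first half coincides with the paper's proof: assemble the $7\times 7$ Gram matrix $\tilde U$ of the weights, observe that $[1,1,1,1,1,1,0]^t$ solves (\ref{Ueq}) (after normalizing so the coefficients sum to $1$), and conclude from Theorem \ref{util}(i) that $\beta_{\tilde{\mu}_t}=\tfrac13(-1,\dots,-1,1,1,1)$, so that the only possible eigenvalue type is $(1<2;6,3)$. (Your detour about diagonalizing the Ricci operator is unnecessary: part (i) of Theorem \ref{util} requires no hypothesis on $\Ric_{\tilde{\mu}_t}$, and in fact $\Ric_{\tilde{\mu}_t}$ is already diagonal here.) The gap is in the second half, and you have located it yourself but not filled it: you never produce the mechanism that converts ``eigenvalue type $(1<2;6,3)$'' together with ``$\tilde{\ngo}_t\not\cong\ngo_t$'' into a contradiction. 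Merely degenerating to a non-isomorphic nilsoliton is not an obstruction to being an Einstein nilradical, the claim that ``only $\ngo_t$ admits a nilsoliton'' in the stratum is precisely what must be proved (so invoking it is circular), and the $x^3$ term in the Pfaffian enters only through the non-isomorphism Lemma \ref{isomor} --- it says nothing directly about the soliton equation.

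The missing ingredient is a closed-orbit argument. The paper reasons as follows: if $\tilde{\ngo}_t$ were an Einstein nilradical, its nilsoliton metric would realize the eigenvalue type $(1<2;6,3)$, i.e. $\Ric|_{\vg}$ and $\Ric|_{\zg}$ would both be scalar; by \cite[Proposition 9.1]{degeneration} this forces the $\Sl(6)\times\Sl(3)$-orbit of $\tilde{\mu}_t$ to be \emph{closed}. On the other hand, taking $A(s)=(-s,-s,s,s,0,0,s,-s,0)\in\tg$, which lies in $\slg(6)\oplus\slg(3)$, one checks that $\lim_{s\rightarrow\infty}e^{-A(s)}.\tilde{\mu}_t=\mu_t$, so $\mu_t\in\overline{\Sl(6)\times\Sl(3).\tilde{\mu}_t}$. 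Since a closed orbit equals its closure (equivalently, by \cite{RS} the closed orbit inside an orbit closure is unique), this would give $\mu_t\in\Sl(6)\times\Sl(3).\tilde{\mu}_t$, hence $\ngo_t\cong\tilde{\ngo}_t$, contradicting Lemma \ref{isomor}. Your sketch contains the degeneration and the appeal to Lemma \ref{isomor}, but not the closedness criterion; note also that for this argument it is essential that the degeneration be realized by a one-parameter subgroup inside $\Sl(6)\times\Sl(3)$, not merely inside $\Gl_9(\RR)$, a point absent from your outline. Without \cite[Proposition 9.1]{degeneration} (or an equivalent statement tying the nilsoliton condition of this eigenvalue type to closedness of the $\Sl(6)\times\Sl(3)$-orbit), the proof does not go through.
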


\begin{proof}
We begin by noting that in this case, by ordering the brackets as before with the new one at the end, we obtain that
$\tilde{U}$ is given by
$$
\tilde{U}:=\left[\begin{smallmatrix}
3&&&1&1&1&1\\
&3&&1&1&1&1\\
&&3&1&1&1&1\\
1&1&1&3&&&1\\
1&1&1&&3&&1\\
1&1&1&&&3&1\\
1&1&1&1&1&1&3
\end{smallmatrix}\right],
$$
and thus $[1,1,1,1,1,1,0]^t$ is a solution to the system (\ref{Ueq}). Therefore we can apply again Theorem \ref{util}, (i), and get that
$$
\beta_{\tilde{\mu}_t}=\sum v_{ij}^k\alpha_{ij}^k,
$$
\noindent that is, the diagonal matrix with entries $\frac{1}{3}(-1, \dots, -1,1,1,1).$

Note that $\beta_{\tilde{\mu}_t}$ coincides with $\beta_{\mu_t},$ which comes from the fact that $v_{1,2}^7=0.$
Moreover, we can see that for each $t$, $\tilde{\mu}_t$ degenerates to $\mu_t$ and therefore $\tilde{\mu}_t \in S_\beta$ where $\beta=\beta_{\mu_t}=\beta_{\tilde{\mu}_t}$ (see Theorem \ref{util} (ii)). In fact, it is not hard to check that if we define $A(s)=(-s,-s,s,s,0,0,s,-s,0)\in\tg$ then $\lim\limits_{s\rightarrow \infty} \phi_s.\tilde{\mu_t}=\mu_{t},$ where $\phi_s=e^{-A(s)}.$
Note that $A(s) \in \slg(6)\times \slg(3)$ for any $s.$
Assume now that $\tilde{\ngo_t}$ is an Einstein nilradical. Therefore, by Theorem \ref{util} (iii), its eigenvalue type is a multiple of $\beta + ||\beta||^2 I,$ which is the diagonal matrix with entries $\frac{2}{3}(1,\dots,1,2,2,2).$
That is, the eigenvalue type of $\tilde{\ngo}_t$ must be $(1<2;6,3),$ or equivalently,
${\Ric_{\tilde{\mu}}}|_\vg$ and ${\Ric_{\tilde{\mu}}}|_\zg$ are both multiple of the identity. Therefore, by \cite[Proposition 9.1]{degeneration}, the $\Sl(6)\times\Sl(3)$ orbit of $\tilde{\ngo}_t$ is closed for each $t$. This implies that $\mu_t$ and $\tilde{\mu}_t$ should be isomorphic Lie algebras as we have seen that $\mu_t\in\overline{\Sl(6)\times\Sl(6).\tilde{\mu}_t}$, which is a contradiction by Lemma \ref{isomor}.  From all this we can deduce that $\tilde {\ngo}_t$ is not an Einstein nilradical for any $t$.
\end{proof}

We will finally prove the fact that our family $\tilde{\ngo}_t$ of Lie algebras is really a curve in the moduli space of isomorphism classes. This can be proved by using the classification given in \cite{GT}, but according with what we have been doing, we will give a self contained proof of this fact.

\begin{theorem} \label{nonisom} $\tilde{\ngo}_{t},$  $t\in (1,\infty)$, is a curve of pairwise non-isomorphic nilpotent Lie algebras, none of which is an Einstein nilradical.
\end{theorem}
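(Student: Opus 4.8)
The statement bundles two claims: that none of the $\tilde{\ngo}_t$ is an Einstein nilradical, and that they are pairwise non-isomorphic for $t\neq s$ in $(1,\infty)$. The first is exactly Proposition \ref{noneinstein}, so the plan is to concentrate on non-isomorphism. The obvious invariant, the projective class of the Pfaffian form, is useless here: by (\ref{pffam21}) each $f_{\tilde{\mu}_t}$ factors as a plane times a smooth conic meeting it in two points, so all of them are projectively equivalent (to $x^3+xyz$) and cannot detect $t$. Since the $\tilde{\ngo}_t$ are not Einstein nilradicals, the ``isomorphic implies isometric'' shortcut used for Lemma \ref{isom} is also unavailable. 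I would therefore exploit the degeneration $\tilde{\mu}_t\to\mu_t$ already produced inside the proof of Proposition \ref{noneinstein}, together with the fact that the $\mu_t$ are separated by Lemma \ref{isom}.

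The key structural input is that $\mu_t$ is a \emph{polystable limit} of $\tilde{\mu}_t$: recall that $\phi_s.\tilde{\mu}_t\to\mu_t$ with $\phi_s=e^{-A(s)}$ and $A(s)\in\slg(6)\times\slg(3)$, so that $\mu_t\in\overline{\Sl(6)\times\Sl(3).\tilde{\mu}_t}$, while $\mu_t$ itself has eigenvalue type $(1<2;6,3)$; hence ${\Ric_{\mu_t}}|_\vg$ and ${\Ric_{\mu_t}}|_\zg$ are scalar and, by \cite[Proposition 9.1]{degeneration}, the orbit $\Sl(6)\times\Sl(3).\mu_t$ is closed. Thus $\Sl(6)\times\Sl(3).\mu_t$ is a closed orbit lying inside the closure $\overline{\Sl(6)\times\Sl(3).\tilde{\mu}_t}$, and I would use this closed orbit as the isomorphism invariant that distinguishes the members of the family.

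Concretely, suppose $\tilde{\ngo}_t\cong\tilde{\ngo}_s$. By (\ref{iso}) this yields a pair $(A_1,A_2)\in\Gl(6)\times\Gl(3)$ intertwining the two $J$-maps; writing $A_i=c_i\,s_i$ with $s_i\in\Sl$ and absorbing the central scalars (which act on a bracket by an overall factor and hence realize $c\,\mu\cong\mu$ as Lie algebras, via $(\mathrm{id}_\vg,c\,\mathrm{id}_\zg)$), I would reduce to $\tilde{\mu}_s\cong_{\Sl(6)\times\Sl(3)}c\,\tilde{\mu}_t$ for some $c\in\RR^{*}$. Since scaling is central and commutes with the $\Sl(6)\times\Sl(3)$-action, one has $\overline{\Sl(6)\times\Sl(3).(c\,\tilde{\mu}_t)}=c\,\overline{\Sl(6)\times\Sl(3).\tilde{\mu}_t}$, so the closed orbits inside coincide after scaling: $\Sl(6)\times\Sl(3).\mu_s=\Sl(6)\times\Sl(3).(c\,\mu_t)$. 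This says precisely that $\ngo_s\cong\ngo_t$ (an isomorphism up to the harmless rescaling of the bracket), and Lemma \ref{isom} then forces $s=t$.

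The step I expect to be the main obstacle is the \emph{uniqueness} of the closed orbit inside the orbit closure. Over $\RR$ this is not a formality — closed orbits in a real orbit closure need not be unique in general — so the argument genuinely rests on the real geometric-invariant-theory package behind \cite{degeneration} and \cite{minimal} (the correspondence between closed orbits, minimal vectors of the norm functional, and nilsolitons for the $\Sl(6)\times\Sl(3)$-action), which guarantees that a semistable $\tilde{\mu}_t$ admits a single polystable degeneration up to isometry. Everything else is routine: the Pfaffian computation is already in place, the explicit degeneration and the closedness of $\Sl(6)\times\Sl(3).\mu_t$ are in hand, and the scalar bookkeeping in the reduction to $\Sl(6)\times\Sl(3)$ is elementary.
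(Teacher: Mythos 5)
Your proposal is correct and follows essentially the same route as the paper: reduce a hypothetical isomorphism $\tilde{\ngo}_t\cong\tilde{\ngo}_s$ to an $\Sl(6)\times\Sl(3)$-equivalence up to a scalar, pass to the closed orbits $\Sl(6)\times\Sl(3).\mu_t$ sitting inside the orbit closures via the degeneration from Proposition \ref{noneinstein} and \cite[Proposition 9.1]{degeneration}, and conclude $t=s$ from Lemma \ref{isom}. The one step you flag as the main obstacle --- uniqueness of the closed orbit in a real orbit closure --- is precisely what the paper settles by citing Richardson--Slodowy \cite{RS}, which establishes this for linear actions of real reductive algebraic groups, so your argument is complete once that reference is invoked.
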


\begin{proof} We have already seen that $\tilde{\mu}_{t}$ are not Einstein nilradicals, so it is enough to show that they are non isomorphic as Lie algebras. Let us take $t,s \in (1,\infty)$, and assume that $\tilde{\mu}_t$ and $\tilde{\mu}_{s}$ are isomorphic.  Since they are type $(6,3)$ nilpotent Lie algebras, it can be seen that
$$
\Sl(6)\times\Sl(3).\tilde{\mu}_t=\Sl(6)\times\Sl(3).(c\tilde{\mu}_s), \quad\mbox{for some}\; c>0.
$$
On the other hand, as we have already seen, $\mu_t \in \overline{\Sl(6)\times\Sl(3).\tilde{\mu}_t}$, for any $t\in \RR$, and therefore, $\mu_t$ and $c\mu_s$ belong to the closure of a single $\Sl(6)\times\Sl(3)$-orbit.  Now, since $\mu_t$ is an Einstein nilradical for any $t$, with eigenvalue type $(1<2;6,3)$, as we have already pointed out,  $\Sl(6)\times\Sl(3).\mu_t$ is closed for any $t \in \RR$ (see \cite[Proposition 9.1]{degeneration}).  But by \cite{RS} there is only one closed orbit in the closure of an orbit of any reductive algebraic group linear action. Hence,
$$
\Sl(6)\times\Sl(3).\mu_t = \Sl(6)\times\Sl(3).(c\mu_s),
$$
which implies that $\mu_t$ and $\mu_{s}$ are isomorphic.  Thus $t=s$ by Lemma \ref{isom}, concluding the proof.
\end{proof}

\section{Another curve.}

In this section we consider a new curve arising from adding one more bracket to family (\ref{fam21}). We will show that this is a curve of nilpotent Lie algebras none of which are Einstein nilradicals. Since we are following the same lines as in the previous section, we shall only outline some of the proofs. The main result in this case is that this curve is not isomorphic to the previous ones.

Let $\overline{\mu}_t$ be the two-step nilpotent Lie algebras of type $(6,3)$ given by $\overline{\ngo}_t=(\RR^9, \overline{\mu}_t)$ where
\begin{equation}\label{fam31}
\begin{array}{lll}
\overline{\mu}_t(e_5,e_4)=e_7 & \overline{\mu}_t(e_1,e_6)=e_8 & \overline{\mu}_t(e_3,e_2)=e_9\\

\overline{\mu}_t(e_3,e_6)=t\;e_7 & \overline{\mu}_t(e_5,e_2)=t\;e_8 & \overline{\mu}_t(e_1,e_4)=t\;e_9\\

\overline{\mu}_t(e_1,e_2)=e_7 & \overline{\mu}_t(e_3,e_4)=e_8.&
\end{array}
\end{equation}

These algebras are also considered in \cite{GT}, included in their Family 2 with nilpotent part 1 (see \cite[Table 3]{GT}).

In this case the Pfaffian form is given by
\begin{equation}\label{pffam31}
f_{\overline{n}_t}(x,y,z)=t(x^3+y^3)-(t^3+1)xyz.
\end{equation}

\begin{lemma} For any $t\in (1,\infty)$, $\overline{\ngo}_t$ is not isomorphic to any of $\ngo_s$ or $\tilde{\ngo}_s$, $s\in (1,\infty)$.
\end{lemma}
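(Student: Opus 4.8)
The plan is to argue exactly as in the proof of Lemma~\ref{isomor}, by comparing the zero sets of the Pfaffian forms. Recall that if $\overline{\ngo}_t$ were isomorphic to $\ngo_s$ (resp. to $\tilde{\ngo}_s$), then by the isomorphism criterion~(\ref{iso}) the corresponding Pfaffian forms would be projectively equivalent, so by~(\ref{zeros}) their zero sets in $\RR^3$ would differ by an element of $\Gl_3(\RR)$. Since an invertible linear map carries planes to planes and irreducible components to irreducible components, it suffices to exhibit a discrepancy in the \emph{decomposition type} of $Z(f_{\overline{\mu}_t})$ compared with $Z(f_{\mu_s})$ and $Z(f_{\tilde{\mu}_s})$.

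By~(\ref{pffam2}) and~(\ref{pffam21}), up to a nonzero scalar the Pfaffian forms of $\ngo_s$ and $\tilde{\ngo}_s$ are $xyz$ and $x(x^2+yz)$, whose zero sets were already analyzed in Lemma~\ref{isomor}: the first is a union of three planes, and the second is the union of the plane $x=0$ with the quadric cone $x^2+yz=0$. In particular both contain at least one plane. I therefore plan to show that the cubic $f_{\overline{\mu}_t}(x,y,z)=t(x^3+y^3)-(t^3+1)xyz$ of~(\ref{pffam31}) is \emph{irreducible} over $\RR$ for every $t\in(1,\infty)$; its zero set then contains no plane, and so cannot be the image under an element of $\Gl_3(\RR)$ of either of the reducible cones above.

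The heart of the argument, and the step I expect to require the most care, is this irreducibility. I would verify it by a direct factorization analysis: viewing $f_{\overline{\mu}_t}$ as a polynomial of degree one in $z$ over $\RR[x,y]$, any splitting into a linear and a quadratic factor forces exactly one factor to be $z$-free; matching the $z$-linear part $-(t^3+1)xy\,z$ against the $z$-free part $t(x^3+y^3)$ then leads to a contradiction, using that $t\neq 0$ and $t^3+1\neq 0$ for $t>1$, together with the fact that over $\RR$ the only linear factor of $x^3+y^3=(x+y)(x^2-xy+y^2)$ is $x+y$ (the quadratic factor being irreducible, its discriminant being $-3$). Alternatively, one may compute the singular locus of the projective curve $f_{\overline{\mu}_t}=0$: a short calculation from the partial derivatives shows its only singular point is the ordinary node $(0:0:1)$, whereas every reduced reducible cubic has at least two singular points or a singularity worse than a node, again forcing irreducibility.

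Once irreducibility is in hand the conclusion is immediate. An irreducible cubic cone cannot equal a linear image of the reducible cones $Z(xyz)$ or $Z(x(x^2+yz))$, so by~(\ref{zeros}) the form $f_{\overline{\mu}_t}$ is not projectively equivalent to those of $\ngo_s$ or $\tilde{\ngo}_s$, and hence $\overline{\ngo}_t$ is not isomorphic to any $\ngo_s$ or $\tilde{\ngo}_s$ with $s\in(1,\infty)$, as claimed.
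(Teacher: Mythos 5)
Your proposal is correct and follows essentially the same route as the paper: the paper's proof likewise reduces everything to the observation that the zero set of $f_{\overline{\mu}_t}$ (equivalently, of $\overline{P}=xyz+x^3+y^3$) contains no $2$-dimensional subspace while $Z(P)$ and $Z(\tilde{P})$ each contain a plane, and your factorization argument simply supplies the irreducibility detail that the paper dismisses as ``easy to see.'' One small caution: your alternative singular-locus argument should be carried out over $\CC$ rather than $\RR$ (a cubic such as $x(x^2+y^2+z^2)$ is reducible over $\RR$ yet has no real singular points, so the dichotomy ``two singular points or a singularity worse than a node'' can fail over $\RR$), but this does not affect your primary argument, which is complete as written.
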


\begin{proof}
For each fixed $t\in (1,\infty)$, the Pfaffian form $f_{\overline{\mu}_t}$ is projectively equivalent to
$$
\overline{P}=xyz+x^3+y^3.
$$
It is easy to see that the set of zeros $\overline{Z}$ of $\overline{P}$ does not contain any $2$-dimensional subspace, and hence there is no any linear transformation taking $\overline{Z}$ to the set of zeros of $P$ or $\tilde{P}$ (see the proof of Lemma \ref{isomor}).  This implies that $\overline{\ngo}_t$ can not be isomorphic to either $\ngo_s$ or $\tilde{\ngo}_s$ as we wanted to show (see also \cite{D} pp. 159).
\end{proof}

As in the previous case, to see if  $\overline{\ngo}_t$ is an Einstein nilradical or not, we note that
by ordering the brackets as before with the new one at the end, we obtain that
$\overline{U}$ is given by
$$
\overline{U}:=
\left[\begin{smallmatrix}
3&&&1&1&1&1&1\\
&3&&1&1&1&1&1\\
&&3&1&1&1&1&1\\
1&1&1&3&&&1&1\\
1&1&1&&3&&1&1\\
1&1&1&&&3&1&1\\
1&1&1&1&1&1&3&\\
1&1&1&1&1&1&&3
\end{smallmatrix}\right],
$$
and so a solution to (\ref{Ueq}) is given by $(1,1,1,1,1,1,0,0)$.  Using the same arguments as before, we get that $\beta_{\overline{\mu}_t}=\frac{1}{3}(-1, \dots, -1,1,1,1)$.  Analogously, we can see that for each $t$, $\overline{\mu}_t$ degenerates to $\mu_t$ and therefore $\overline{\mu}_t \in S_\beta$ where $\beta$ as defined above (see Theorem \ref{util}, (ii)). Thus $\overline{\mu}_t$ and $\tilde{\mu}_s$ belongs to the same stratum for any $t,s \in (1,\infty)$.  In this case the degeneration can be realized by
$\phi_s=e^{-A(s)},$ where $A(s)=(0,-s,0,-s,-3s,2s,2s,2s,-s)\in\tg$.  Therefore, since $A(s) \in \slg(6)\times \slg(3)$ for any $s$, we can see that $\overline{\ngo}_t$ is not an Einstein nilradical for all $t,$  by arguing exactly as in the proof of Lemma \ref{noneinstein}.
Finally, we can see that these algebras give rise a curve by using \cite{GT}, or by following the lines of the proof of Theorem \ref{nonisom}.

\end{document}